
\RequirePackage[l2tabu, orthodox]{nag}
\documentclass{scrartcl}
\usepackage{typearea}
\typearea{12}
\usepackage{amssymb, amsmath, amsthm}
\usepackage{enumitem}
\usepackage[all, warning]{onlyamsmath}
\usepackage[dvipdfmx]{graphicx}
\usepackage[ruled, lined]{algorithm2e}
\usepackage{tikz}\usetikzlibrary{arrows}
\usepackage{color}
\usepackage{authblk}
\usepackage{url}
\usepackage{ulem}

\newtheorem{theorem}{Theorem}[section]

\newtheorem{lemma}[theorem]{Lemma}



%

\makeatletter
\global\let\tikz@ensure@dollar@catcode=\relax
\makeatother

\title{Minimization of Akaike's Information Criterion  \\ in Linear Regression Analysis \\ via Mixed Integer Nonlinear Program}
\author[1]{Keiji Kimura\thanks{744 Motooka, Nishi-ku, Fukuoka 819-0395, Japan. k-kimura@math.kyushu-u.ac.jp}}
\author[2]{Hayato Waki\thanks{744 Motooka, Nishi-ku, Fukuoka 819-0395, Japan. waki@imi.kyushu-u.ac.jp}}
\affil[1]{Faculty of Mathematics, Kyushu University}
\affil[2]{Institute of Mathematics for Industry, Kyushu University}
\date{First version :  June 17, 2016, Revised : \today}

\begin{document}
\maketitle

\begin{abstract}
Akaike's information criterion (AIC) is a measure of the quality of a statistical model for a given set of data. We can determine the best statistical model for a particular data set by the minimization of the AIC. Since we need to evaluate exponentially many candidates of the model by the minimization of the AIC, the minimization is unreasonable. Instead, stepwise methods, which are local search algorithms, are commonly used to find a better statistical model though it may not be the best. 

We propose a branch and bound search algorithm for  a mixed integer nonlinear programming formulation of the AIC minimization  by Miyashiro and Takano (2015). 
More concretely,  we propose methods to find lower and upper bounds,  
 and branching rules for this minimization. We then combine them with SCIP, which is a mathematical optimization software and a branch-and-bound framework. We show that the proposed method can provide the best statistical model based on AIC for small-sized or medium-sized benchmark data sets in UCI Machine Learning Repository. Furthermore, we show that this method finds  good quality solutions for large-sized benchmark data sets. 

{\bfseries Keywords} : Mixed integer nonlinear program, branch-and-bound, SCIP and Akaike's information criterion
\end{abstract}
\section{Introduction}
Selecting the best statistical model from a number of candidate statistical models for a given set of data is one of the most important problems solved in statistical applications, {\itshape e.g.}  regression analysis. This is called {\itshape variable selection}. The purposes of variable selection are to provide the simplest statistical model for  a given data set and  to improve the prediction performance while keeping the goodness-of-fit for a given data set. See \cite{Guyon03} for more details on variable selection.  

In variable selection based on  {\itshape an information criterion}, all the candidates are evaluated by the information criterion and select a statistical model by using those  evaluations. 
Akaike's information criterion (AIC) is one of the information criteria and  proposed in \cite{Akaike74}. An AIC value is computed for each candidate, and the model whose AIC value is the smallest is selected as the best statistical model. Since we often need to handle too many candidates of statistical models in practical applications, the global minimization based on AIC is not practical. Instead of the global  minimization, stepwise methods, which are local search algorithms, are commonly used to find a statistical model which has  as small AIC as possible, but it may not be the smallest. 

The contribution of our study is to propose a branch and bound search algorithm for a mixed integer nonlinear programming (MINLP) formulation of the minimization of AIC in linear regression by Miyashiro and Takano \cite{Miyashiro15}. 
Miyashiro and Takano \cite{Miyashiro15} propose a mixed integer second-order cone programming (MISOCP) formulation from the MINLP formulation and solve  the resulting problems by CPLEX \cite{cplex}, while we propose  procedures to find lower and upper bounds of the MINLP problems and define branching rules for efficient computation. 
In addition, we provide an implementation to solve it efficiently via SCIP. SCIP is a mathematical optimization software and a branch-and-bound framework. SCIP has high  flexibility of user plugin and control on various parameters in the branch-and-bound framework for efficient computation. 
 We also propose an efficient computation for a set of data which has linear dependency. By applying our proposed method to benchmark data sets in \cite{ucimc},  we can obtain the best statistical models for some of them. Our implementation is available at \cite{maic}. 

We introduce some  related work. Miyashiro and Takano \cite{Miyashiro15} propose a MISOCP formulation  for variable selection based on some information criteria in linear regression. 
Bertsimas  and Shioda \cite{Bertsimas09} and Bertsimas, King and Mazumder  \cite{Bertsimas15} provide a mixed integer quadratic programming (MIQP) formulation for linear regression with a cardinality constraint. Their formulation is available to our problems by fixing the number of explanatory variables. We compare our proposed method with MIQP and MISOCP formulations, and observe that our proposed method outperforms MIQP and  MISOCP formulations. 

The organization of this manuscript is as follows: We give a brief introduction of linear regression based on AIC in Section 2. We introduce the MINLP formulation of the AIC minimization and  ways to find lower and upper bounds used in the branch-and-bound framework in Section 3. Section 4 introduces techniques for more efficient computation, {\itshape e.g.} branching rules and  treatment on data which has linear dependency.  We present numerical results in Section \ref{sec:numerical}. In particular, we show the numerical comparison with MISOCP and MIQP formulations. In addition, we present numerical performances of branching rules proposed  in subsection \ref{subsec:branch}.  We discuss future work of our proposed method in Section \ref{sec:futurework}. This manuscript is a full paper version of \cite{Kimura16}. 

\section{Preliminary on Akaike's information criterion in linear regression}\label{sec:AIC}
We explain how to select the best statistical model via AIC in linear regression analysis. Linear regression is a fundamental statistical tool which determines coefficients $\beta_0, \ldots, \beta_p\in\mathbb{R}$ for the following equation  from a given set of data:
\begin{equation}\label{Reg}
y = \beta_0 + \sum_{j=1}^p \beta_j x_{j}.  
\end{equation}
Here $x_1, \ldots, x_p$ and $y$  are called {\itshape the explanatory variables} and  {\itshape the response variable} respectively.  
In fact, we adopt coefficients $\beta_0, \ldots, \beta_p$ which minimize $\sum_{i=1}^n\epsilon_i^2$ for a given set of  data $(x_{i1}, \ldots, x_{ip}, y_i)\in\mathbb{R}^p\times\mathbb{R} \ (i=1, \ldots, n)$, where $\epsilon_i$ is the $i$th residual and defined by $\epsilon_i = y_i - \beta_0 -\sum_{j=1}^p \beta_jx_{ij}$.

Variable selection in linear regression is the problem to select the best subset of explanatory variables based on a given criterion. In statistical applications, a preferred model keeps the goodness-of-fit for a given data set, and contains as a few unnecessary explanatory variable as possible.  In fact, unnecessary explanatory variables may add the noise to the prediction based on the statistical model. As a result, the prediction performance of the model may get  worse. In addition, we need to observe and/or monitor more data for unnecessary explanatory variables, and thus  will spend more cost due to the unnecessary explanatory variables.

Akaike's information criterion (AIC) is one of  criteria for variable selection and  proposed in \cite{Akaike74}. 
AIC is used as a measure to select the preferred statistical model in all candidates. The statistical model whose AIC value is the smallest is expected as the preferred statistical mode. In  linear regression analysis, this selection corresponds to the selection of a subset of the set of explanatory variables in (\ref{Reg})  via AIC. More precisely, for a set $S\subseteq \{1, \ldots, p\}$ of candidates of explanatory variables in the statistical model (\ref{Reg}), AIC is defined in \cite{Akaike74} as follows: 
\begin{equation}\label{AIC}
\mbox{AIC}(S) = -2\max_{\beta, \sigma^2}\{\ell(\beta, \sigma^2) : \beta_j = 0 \ (j\in \{1, \ldots, p\}\setminus S)\} + 2(\#(S) + 2)
\end{equation}
where $\beta = (\beta_0, \ldots, \beta_p)\in\mathbb{R}^{p+1}$, $\#(S)$ stands for the number of elements in the set $S$ and $\ell(\beta, \sigma^2)$ is the log-likelihood function. 
Computing AIC values for all subsets $S$ of the explanatory variables in (\ref{Reg}), we can obtain the best AIC-based subset. However, since the number of subsets is $2^p$, the computation of all subsets is not practical.

Under assumption that all the residual $\epsilon_i$ are independent and normally distributed with the zero mean and variance $\sigma^2$, the log-likelihood function can be formulated as 
\[
\ell(\beta, \sigma^2) = -\frac{n}{2}\log(2\pi\sigma^2) -\frac{1}{2\sigma^2}\sum_{i=1}^n\epsilon_i^2.
\] 
We focus on the first term in (\ref{AIC}) to simplify (\ref{AIC}). Let $S$ be a set  of candidates of explanatory variables in (\ref{Reg}). By substituting $\beta_j = 0 \ (j\in \{1, \ldots, p\}\setminus S)$ to the objective function, the first term can be regarded  as  the unconstrained minimization. Thus minimum solutions satisfy the following equation
\[
\frac{d \ell}{d (\sigma^2)} = -\frac{n}{2\sigma^2} +\frac{1}{2(\sigma^2)^2}\sum_{i=1}^n\epsilon_i^2=0. 
\]
From this equation, we obtain $\sigma^2 = \frac{1}{n}\sum_{i=1}^n\epsilon_i^2$. 
Substituting this equation to (\ref{AIC}), we simplify (\ref{AIC}) as follows:
\begin{align}\label{AIC2} 
\mbox{AIC}(S) &= \min_{\beta_j}\left\{
n\log\left(\sum_{i=1}^n 
\epsilon_i^2\right): \beta_j = 0 \ (j\in\{1, \ldots, p\}\setminus S)
\right\}\\
&+ 2(\#(S) + 2)+ n\left(\log(2\pi/n) + 1\right). \nonumber 
\end{align}
We use (\ref{AIC2}) to provide our MINLP formulation of the minimization of  AIC in the next section. 

The following lemma ensures that the minimization in the first term of (\ref{AIC2}) has an optimal solution with a finite value. 
\begin{lemma}\label{existos}
For any  subset $S\subseteq\{1, \ldots, p\}$, the minimization in the first term of (\ref{AIC2}) has an optimal solution with a finite value. 
\end{lemma}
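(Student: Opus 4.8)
The plan is to reduce the problem to an ordinary least-squares problem and then transfer the conclusion through the logarithm. Fix $S\subseteq\{1,\dots,p\}$, collect the free coefficients into $b=(\beta_0,(\beta_j)_{j\in S})\in\mathbb{R}^{\#(S)+1}$, and let $Z=Z_S$ be the $n\times(\#(S)+1)$ matrix whose first column is the all-ones vector $\mathbf{1}$ and whose remaining columns are $(x_{ij})_{i=1,\dots,n}$ for $j\in S$. Substituting $\beta_j=0$ for $j\notin S$ into $\epsilon_i=y_i-\beta_0-\sum_j\beta_j x_{ij}$ gives $\sum_{i=1}^n\epsilon_i^2=\|y-Zb\|_2^2$, so the minimization in the first term of (\ref{AIC2}) is exactly $\min_{b\in\mathbb{R}^{\#(S)+1}} n\log\|y-Zb\|_2^2$. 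It therefore suffices to show that the residual-sum-of-squares map $f(b):=\|y-Zb\|_2^2$ attains a \emph{strictly positive} minimum on $\mathbb{R}^{\#(S)+1}$.

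For attainment I would invoke the classical least-squares argument: $\mathrm{range}(Z)$ is a finite-dimensional, hence closed, linear subspace of $\mathbb{R}^n$, so by the orthogonal projection theorem there is a point $w^{\ast}=Zb^{\ast}\in\mathrm{range}(Z)$ with $y-w^{\ast}$ orthogonal to $\mathrm{range}(Z)$; equivalently $b^{\ast}$ solves the normal equations $Z^{\top}Zb=Z^{\top}y$, which are always consistent since $\mathrm{range}(Z^{\top}Z)=\mathrm{range}(Z^{\top})$. Any such $b^{\ast}$ minimizes $f$, and $f^{\ast}:=f(b^{\ast})=\|y-w^{\ast}\|_2^2$; if $Z$ has full column rank then $b^{\ast}=(Z^{\top}Z)^{-1}Z^{\top}y$ is the unique minimizer, and otherwise the minimizers form an affine subspace, but in either case a finite minimizer exists.

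It remains to pass from $f$ to $n\log f$. Since $t\mapsto n\log t$ is continuous and strictly increasing on $(0,\infty)$, as soon as $f^{\ast}>0$ the same $b^{\ast}$ minimizes $n\log f(\cdot)$, and no minimizing value is approached on $\{b:f(b)=0\}$ because there $n\log f(b)=-\infty$; hence $b^{\ast}$ is an optimal solution with finite optimal value $n\log f^{\ast}$. The one point that genuinely needs care is the strict positivity $f^{\ast}>0$, which is equivalent to $y\notin\mathrm{range}(Z_S)$, i.e.\ to the data admitting no exact affine fit from the variables indexed by $S$. I would obtain it from the standing nondegeneracy of the data set: it must in particular hold for the full model $S=\{1,\dots,p\}$, since otherwise the maximum in (\ref{AIC}) is $+\infty$ — the estimate $\widehat{\sigma^2}=f^{\ast}/n$ would vanish and $\ell(\beta,\sigma^2)\to+\infty$ as $\sigma^2\to 0^{+}$ — so that AIC would be undefined; and since $\mathrm{range}(Z_S)\subseteq\mathrm{range}(Z_{\{1,\dots,p\}})$, positivity for the full model forces $f^{\ast}>0$ for every $S$.

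The main obstacle is exactly this last step: attainment of the least-squares minimum is routine, but ruling out the value $-\infty$ relies on positivity of the minimal residual sum of squares, which really does require a nondegeneracy hypothesis on the data — without it the statement is false, as one sees by taking data lying exactly on a hyperplane spanned by $\mathbf{1}$ and some of the $x_j$. The proof should therefore make that hypothesis explicit (or point to where it is assumed in the setup) and then run the two soft steps above.
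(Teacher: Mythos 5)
Your proof is correct and follows essentially the same route as the paper: by monotonicity of the logarithm the problem reduces to the unconstrained least-squares problem, whose minimum is attained --- the paper simply cites Boyd and Vandenberghe (a convex quadratic bounded below attains its infimum) where you spell out the projection-theorem/normal-equations argument, which is the same step made explicit. Your extra discussion of strict positivity of the minimal residual sum of squares is a fair point rather than a defect: the paper's own proof silently ignores the degenerate case where an exact fit exists (minimal sum of squares equal to zero), in which the logarithmic value is $-\infty$ and ``finite value'' fails, so your explicit nondegeneracy remark is, if anything, more careful than the published argument.
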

\begin{proof}
Since the logarithm function has the monotonicity, the optimal solution of the minimization in the first term of (\ref{AIC2}) is also optimal for the following unconstrained quadratic problem:
\begin{align}
\label{QP}
 &\min_{\beta_j}\left\{\sum_{i=1}^n \left(y_i - \beta_0 -\sum_{j\in S}\beta_j x_{ij}\right)^2: \beta_j \in \mathbb{R} \ (j\in\{0\}\cup S)
\right\}. 
\end{align}
Since the objective function of (\ref{QP}) is bounded below, it follows from \cite[Section 9.1.1]{Boyd04} that (\ref{QP}) has an optimal solution. 
\end{proof}

\section{MINLP formulation for the minimization of AIC}\label{sec:MINLP}
We provide the minimization of $\mbox{AIC}(S)$ over $S\subseteq\{1, \ldots, p\}$ by the following MINLP formulation:
\begin{equation}\label{MINLP}
\displaystyle\min_{\beta_j, z_j, \epsilon_i, k} \left\{n\displaystyle\log\left(\sum_{i=1}^n \epsilon_i^2\right) + 2k : \begin{array}{l}
\epsilon_i = y_i-\beta_0 -\displaystyle\sum_{j=1}^p\beta_j x_{ij} \ (i=1, \ldots, n),  \\
 \displaystyle\sum_{j=1}^p z_j = k, \beta_0, \beta_j\in\mathbb{R} \  (j=1, \ldots, p), \\
 z_j\in\{0, 1\}, z_j=0\Rightarrow \beta_j=0  \  (j=1, \ldots, p)
\end{array}
\right\}
\end{equation}
Here the last constraints represent the logical relationships, {\itshape i.e.}  $\beta_j$ has to be zero if $z_j=0$. This formulation is provided in \cite[eq. (22) -- (25)]{Miyashiro15}. 



Next we provide a procedure to find a lower bound of the subproblem of  (\ref{MINLP}) at each node in the branch-and-bound tree. Some variables $z_j$ in (\ref{MINLP}) are fixed to zero or one at each node of the tree. We define the sets $Z_0$, $Z_1$ and $Z$ for a given node as follows:
\begin{align*}
Z_1 &= \{j\in \{1, \ldots, p\} : z_j \mbox{ is fixed to } 1\}, Z_0 = \{j\in \{1, \ldots, p\} : z_j \mbox{ is fixed to } 0\}, \\
Z &= \{j\in \{1, \ldots, p\} : z_j \mbox{ is not  fixed}\}. 
\end{align*}
We remark that $Z_1\cup Z_0\cup Z = \{1, \ldots, p\}$ and that each set is disjoint with one another. In other words, we can uniquely specify a node in the branch-and-bound search tree by $Z_1, Z_0$ and $Z$. We denote the node by $V(Z_1, Z_0, Z)$. Then the subproblem at the node $V(Z_1, Z_0, Z)$ is formulated as follows:
\begin{equation}\label{subMINLP}
 \left\{
\begin{array}{cl}
\displaystyle\min_{\beta_j, z_j} & n\displaystyle\log\left(\sum_{i=1}^n 
\left(y_i-\beta_0 -\sum_{j=1}^p\beta_j x_{ij}\right)^2\right) + 2\displaystyle\sum_{j=1}^p z_j \\
\mbox{subject to} & z_j = 1 \ (j\in Z_1), z_j = 0 \ (j\in Z_0), z_j\in\{0, 1\} \ (j\in Z), \\
&  \beta_0, \beta_j\in\mathbb{R} \  (j=1, \ldots, p), \beta_j=0 \ (j\in Z_0) \  z_j=0\Rightarrow \beta_j=0  \ (j\in Z)
\end{array}
\right.  
\end{equation}
By relaxing the integrality of variables $z_j$ in (\ref{subMINLP}), we obtain the following relaxation problem:
\begin{equation}\label{relax0}
 \left\{
\begin{array}{cl}
\displaystyle\min_{\beta_j, z_j} & n\displaystyle\log\left(\sum_{i=1}^n 
\left(y_i-\beta_0 -\sum_{j=1}^p\beta_j x_{ij}\right)^2\right) + 2\displaystyle\sum_{j=1}^p z_j \\
\mbox{subject to} & z_j = 1 \ (j\in Z_1), z_j = 0 \ (j\in Z_0), 0\le z_j\le 1 \ (j\in Z), \\
&  \beta_0, \beta_j\in\mathbb{R} \  (j=1, \ldots, p), \beta_j=0 \ (j\in Z_0) \  z_j=0\Rightarrow \beta_j=0  \ (j\in Z)
\end{array}
\right.  
\end{equation}
Moreover we consider the following problem by eliminating all the logical relationships and all the  $z_j$: 
\begin{equation}\label{relax}
\displaystyle\min_{\beta_j} \left\{ n\displaystyle\log\left(\sum_{i=1}^n 
\left(y_i-\beta_0 -\sum_{j=1}^p\beta_j x_{ij}\right)^2\right) + 2\#(Z_1)  :
\begin{array}{l}
\beta_0, \beta_j\in\mathbb{R} \ (j\in Z\cup Z_1), \\
\beta_j = 0 \ (j\in Z_0)  
\end{array}
\right\}. 
\end{equation}

It should be noted that the optimal value of (\ref{relax}) is the same as the optimal value of (\ref{relax0}). Hence we deal with(\ref{relax}) as the relaxation problem of (\ref{subMINLP}). In fact, for the optimal solution $\beta^*$ of (\ref{relax}), we construct a sequence $\{(\beta^N, z^N)\}_{N=1}^{\infty}$ as follows: 
\[
\beta^N = \beta^* \mbox{ and } z_j^N = \left\{
\begin{array}{cl}
1 & \mbox{ if } j \in Z_1, \\
1/N & \mbox{ if } j\in Z \mbox{ and } \beta^N_j\neq 0,\\
0 & \mbox{ if } j\in Z \mbox{ and } \beta^N_j = 0, \\
0 & \mbox{ if } j\in Z_0,  
\end{array}
\right. \ (j=1, \ldots, p) 
\]
for all $N\ge 1$. Clearly, $(\beta^N, z^N)$ is feasible for (\ref{relax0}) for all $N\ge 1$. It is sufficient to prove that the objective value $\theta^N$ of (\ref{relax0}) at $(\beta^N, z^N)$ converges the optimal value $\theta^*$ of (\ref{relax}) as $N$ goes to $\infty$. Since we have 
\[
\theta^* \le \theta^N \le n\displaystyle\log\left(\sum_{i=1}^n 
\left(y_i-\beta_0^* -\sum_{j=1}^p\beta_j^* x_{ij}\right)^2\right) + 2\#(Z_1) + \frac{2}{N}\#(Z), 
\]
the right-hand side converges to $\theta^*$ as $N$ goes to $\infty$. This implies that the optimal value of (\ref{relax}) is the same as the optimal value of (\ref{relax0}).

Although the objective function of (\ref{relax}) contains the logarithm function, we can freely remove the constant $2\#(Z_1)$ and the logarithm by the monotonicity of the logarithm function in (\ref{relax}), and thus obtain the following problem from (\ref{relax}):
\begin{equation}\label{relax2}
\displaystyle\min_{\beta_j} \left\{\sum_{i=1}^n\left(y_i-\beta_0 -\displaystyle\sum_{j=1}^p\beta_j x_{ij}\right)^2 : \begin{array}{l}
\beta_0, \beta_j\in\mathbb{R} \ (j\in Z\cup Z_1)\\
\beta_j = 0 \ (j\in Z_0)
\end{array}
\right\}. 
\end{equation}
Since (\ref{relax2}) is the unconstrained minimization of a quadratic function, we can obtain an optimal solution of (\ref{relax2}) by solving a linear system. In our implementation, we call {\normalfont \ttfamily dposv}, which is a built-in function of  LAPACK \cite{lapack}  for solving the linear system. 
We denote the optimal value of (\ref{relax2}) by $\xi^*$. The optimal value of (\ref{relax}) is $n\log(\xi^*) + 2\#(Z_1)$, which is used as a lower bound of the optimal value of (\ref{subMINLP}). 

We provide a procedure that  constructs a feasible solution of (\ref{MINLP}) and computes an upper bound of the optimal value of (\ref{MINLP}). For this we use an optimal solution $\tilde{\beta}\in\mathbb{R}^{p+1}$ obtained after solving (\ref{relax2}).  We define 
\[
\tilde{z}_j = \left\{
\begin{array}{cl}
1 & (\mbox{if } j\in \tilde{Z}\cup Z_1), \\
0 & (\mbox{otherwise})
\end{array}
\right. \ (j=1, \ldots, p), \tilde{\epsilon}_i=y_i-\tilde{\beta}_0-\sum_{j=1}^p\tilde{\beta}_jx_{ij} \ (i=1, \ldots, n) \mbox{ and } \tilde{k}=\sum_{j=1}^p\tilde{z}_j, 
\]
where $\tilde{Z}=\{j\in Z : \tilde{\beta}_j\neq 0\}$. 
It is easy to see that $(\tilde{\beta}_j, \tilde{z}_j, \tilde{\epsilon}_i, \tilde{k})$ is feasible for (\ref{MINLP}) and the objective value is $n\log(\xi^*) + 2\#(\tilde{Z}\cup Z_1)$. If the objective value is smaller than the current best upper bound, then we update the current best upper bound. 


Finally, we give another understanding for our proposed formulation and propose an efficient computation based on this understanding. 
\begin{itemize}
\item Since we can regard (\ref{relax2}) as linear regression whose explanatory variables are in $Z_1\cup Z$, the computation of the lower bound from (\ref{relax2})  corresponds to the computation of the value  $\mbox{AIC}(Z_1\cup Z) - 2\#(Z)$, while the upper bound corresponds to  the AIC value of the statistical model whose explanatory variables are in $Z_1\cup Z$, {\itshape i.e.} $\mbox{AIC}(Z_1\cup Z)$. Therefore, our proposed method computes the AIC value of the the statistical model  with $Z_1\cup Z$ at each node $V(Z_1, Z_0, Z)$, up to constant term $4 + n(\log(2n\pi) + 1)$ of (\ref{AIC2}). In summary, we consider that our proposed method branches and prunes the branch-and-bound search tree efficiently by using this understanding. 

\item The statistical package {\ttfamily leaps} \cite{leaps} in {\ttfamily R} \cite{R} adopts the branch-and-bound scheme in a similar manner.  A QR decomposition is exploited at each node in the branch-and-bound search tree. In particular, {\ttfamily leaps} 
solve a linear system effectively by using the QR decomposition obtained at its parent node. 

{\ttfamily leaps} finds the best statistical model much faster than our proposed method for data sets whose $p$ is less than or equal to 32 and  which do not have linear dependency introduced in subsection \ref{subsec:lineardep}.  If the data set has linear dependency,  {\ttfamily leaps} does not work effectively, while our proposed method works more efficiently by using the linear dependency in data sets.  This technique will be discussed in Section \ref{subsec:lineardep}.

\item We  provide an efficient computation of lower and upper bounds based on this understanding.  We assume that we obtain the lower and upper bounds at  a node $V(Z_1, Z_0, Z)$. Then we do not need to solve (\ref{relax2}) at its child node $V(Z_1\cup\{j\}, Z_0, Z\setminus\{j\})$, where $j\in Z$. This node is generated by branching $z_j=1$ at the node $V(Z_1, Z_0, Z)$. In fact, since we have $(Z_1\cup\{j\})\cup (Z\setminus\{j\}) = Z_1\cup Z$, the relaxation problem (\ref{relax2}) at the child node $V(Z_1\cup\{j\}, Z_0, Z\setminus\{j\})$ is equivalent to one at the node $V(Z_1, Z_0, Z)$. Thus the upper  bound at the child node  is the same as one at the node $V(Z_1, Z_0, Z)$,  and the lower bound is the lower bound computed at the node $V(Z_1, Z_0, Z)$ plus two because of $2\#(Z_1\cup\{j\}) = 2\#(Z_1) + 2$. 

\end{itemize}

\section{Some techniques to improve the numerical performance}\label{sec:technique}
We describe some techniques to improve numerical performance to solve (\ref{MINLP}). 
\subsection{SCIP}
In order to implement our proposed method, we use SCIP \cite{Achterberg09,scip,Vigerske16}, which is a mathematical optimization software and a branch-and-bound framework. In fact, it has high user plug-in flexibility which helps to solve  (\ref{MINLP}) efficiently. We implement a procedure,  which is called {\itshape relaxator} or {\itshape relaxation handler}, to obtain lower bounds as in Section \ref{sec:MINLP}. In addition, we also implement procedures to compute upper bounds via a method based on stepwise methods discussed in subsection \ref{subsec:stepwise} and to define branching rules described in subection \ref{subsec:branch}. 

\subsection{Handling the linear dependency in data}\label{subsec:lineardep}

We illustrate that we can efficiently compute the optimal value of (\ref{MINLP}) by using the linear dependency in data.  Although linearly independent data is often the assumption in standard statistical textbooks, practical data has often linear dependency, {\itshape e.g.} {\normalfont \ttfamily servo} and {\normalfont \ttfamily auto-mpg} in
UCI Machine Learning Repository \cite{ucimc}. 

For a set of given data $(x_{i1}, \ldots, x_{ip}, y_i)\in\mathbb{R}^p\times\mathbb{R} \ (i=1, \ldots, n)$, we denote
\[
x^0 =\left(
\begin{array}{l}
1 \\
\vdots\\
1
\end{array}
\right), x^j =\left(
\begin{array}{l}
x_{1j} \\
\vdots\\
x_{nj}
\end{array}
\right)   \ (j=1, \ldots, p). 
\]
We say that data has linear dependent variables  if the vectors $x^0, x^1, \ldots, x^p\in\mathbb{R}^n$ are linearly dependent. 

From the definition of the linear dependency in data,  we can reduce the computational cost for solving (\ref{relax2}) when the data has linearly dependency. At a node $V(Z_1, Z_0, Z)$, if there exists a subset $S\subseteq Z_1\cup Z$ such that the vectors $\{x^k : k\in S\cup\{0\}\}$, we can fix one of variables $z_j$ in $j\in S\cap Z$ to zero. In fact, since we have  $\sum_{j\in S\cup\{0\}}\alpha_j x^j = 0$ for some $(\alpha_j)_{j\in S\cup\{0\}}\neq 0$, we can removes one variable $z_j$ by substituting this equation to (\ref{relax2}). This implies that the number of variables in (\ref{relax2}) decrease, and thus we solve the linear equation with a fewer variables. 

Moreover we can prune some nodes efficiently by using the linear dependency. The following lemma ensures that we do not need to branch  $z_q=1$ for some $q\in Z$ if the data has the linear dependency. Thus we need to handle only $z_q = 0$ in this case. 

\begin{lemma}\label{lineardepend}
Assume that in (\ref{subMINLP}), there exists $q\in Z$ such that {the vector} $x^q$ and {vectors $\{x^j : j\in Z_1\cup\{0\}\}$} are linearly dependent.  Then an optimal solution of (\ref{subMINLP}) satisfies $z_q = 0$. 
\end{lemma}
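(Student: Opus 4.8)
The plan is to exploit the crucial observation made just before the lemma: the relaxation problem $(\ref{relax2})$ attached to a node depends only on the set $Z_1 \cup Z$ of "available" explanatory variables, and the branch $z_q = 1$ produces a child whose relaxation has exactly the same available set, hence the same regression residuals. So the content of the lemma is really a statement about the quadratic minimization $(\ref{QP})$/$(\ref{relax2})$: if $x^q$ is a linear combination of the constant vector $x^0$ and the vectors $x^j$ for $j\in Z_1$, then including $x^q$ among the regressors does not change the optimal sum of squared residuals. I would make this the first step: writing $x^q = \alpha_0 x^0 + \sum_{j\in Z_1}\alpha_j x^j$, show that for any feasible $\beta$ for $(\ref{subMINLP})$ with $z_q=1$, substituting $\beta_q x^q$ by $\beta_q(\alpha_0 x^0 + \sum_{j\in Z_1}\alpha_j x^j)$ and absorbing the coefficients into $\beta_0$ and the $\beta_j$ ($j\in Z_1$, which are unconstrained since $z_j=1$ is fixed there) yields a feasible point with $z_q=0$ and the \emph{same} residual vector $\epsilon$, hence the same value of the first (logarithmic) term of the objective.

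The second step is to compare the remaining term $2\sum_{j=1}^p z_j$. The transformed point has $z_q$ changed from $1$ to $0$ and leaves every other $z_j$ untouched, so its objective value is exactly two less than that of the original point. Therefore, given any optimal solution of $(\ref{subMINLP})$ with $z_q=1$, we have produced a feasible solution with strictly smaller objective value — a contradiction — so every optimal solution must have $z_q=0$. (One should note Lemma~\ref{existos} guarantees an optimal solution exists, so "optimal solution" is not vacuous; alternatively the argument shows the infimum over $z_q=0$ is no larger than over $z_q=1$, and combined with existence it gives the claim.) Care is needed with the logical-implication constraints $z_j=0 \Rightarrow \beta_j=0$ for $j\in Z$: after the substitution we set $z_q=0$ and $\beta_q=0$ simultaneously, so that implication is satisfied; all other $z_j,\beta_j$ for $j\in Z$ are unchanged, and the constraints for $j\in Z_1\cup Z_0$ are preserved because we only modified $\beta_0$ and $\beta_j$ with $j\in Z_1$, which are free.

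I expect the only subtle point to be bookkeeping around which coefficients are free to absorb the linear-combination terms: the decomposition of $x^q$ uses $x^0$ and $x^j$ for $j\in Z_1$, and precisely those coefficients ($\beta_0$ and $\beta_j,\ j\in Z_1$) are unconstrained in $(\ref{subMINLP})$, so the absorption is legitimate; it would fail if the dependency involved some $x^j$ with $j\in Z$ whose $z_j$ might be $0$. Since the lemma hypothesis restricts the dependency to $\{x^j : j\in Z_1\cup\{0\}\}$, this is exactly the case that works, and the rest is a short direct verification.
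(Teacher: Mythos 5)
Your proposal is correct and follows essentially the same route as the paper's proof: write $x^q=\sum_{j\in Z_1\cup\{0\}}\alpha_j x^j$, absorb $\beta_q\alpha_j$ into the free coefficients $\beta_0$ and $\beta_j$ ($j\in Z_1$), set $\beta_q=0$, $z_q=0$, and conclude the objective drops by exactly $2$, contradicting optimality. Your explicit check of the implication constraints and of why the absorption is legitimate matches (and slightly elaborates on) the paper's argument.
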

\begin{proof}
Let $(\tilde{\beta}_j, \tilde{z}_j)$ be an optimal solution of (\ref{subMINLP}), and $\theta^*$ be the optimal value of (\ref{subMINLP}). Suppose that $\tilde{z}_q = 1$.  It follows from the assumption that there exists  $\alpha_j\in\mathbb{R} \ (j\in Z_1\cup\{0\})$ such that $(\alpha_j)_{j\in Z_1\cup\{0\}}\neq 0$ and 
\[
x^q = \sum_{j\in Z_1\cup\{0\}} \alpha_j x^j. 
\]
Then the following solution $(\hat{\beta}_j, \hat{z}_j)$ is also feasible for  (\ref{subMINLP}):
\[
\hat{\beta}_j =\left\{
\begin{array}{cl}
\tilde{\beta}_j + \tilde{\beta}_q\alpha_j & (\mbox{if }j\in (Z\setminus\{q\})\cup Z_1\cup\{0\}), \\
0 & (\mbox{otherwise})
\end{array}
\right.  
\mbox{ and }
\hat{z}_j = \left\{
\begin{array}{cl}
1 & (\mbox{if }j\neq q \mbox{ and }\tilde{z}_j = 1), \\
0 & (\mbox{otherwise})
\end{array}
\right.
\]
The objective value of (\ref{subMINLP}) at $(\hat{\beta}_j, \hat{z}_j)$ is $\theta^*-2$, which  contradicts  the optimal value $\theta^*$. 
\end{proof}

A given set of data which has linear dependency satisfies the assumption of Lemma \ref{lineardepend}. In fact,  there exists a subset $S\subseteq\{1, \ldots, p\}$ such that the vectors $\{x^k : k\in S\cup\{0\}\}$ are linearly dependent. Hence  Lemma \ref{lineardepend} ensures that we do not need to generate a child node by branching $z_q = 1$ at a node $V(Z_1, Z_0, Z)$ when $q\in S\cap Z$ and $S\setminus\{q\}\subseteq Z_1$.  

In addition, if there exists a subset $S\subseteq\{1, \ldots, p\}$ such that for every $j\in S$, the vectors $\{x^k : k\in \{0\}\cup(S\setminus\{j\})\}$ are linearly dependent, then we can prune some nodes before applying our proposed method to (\ref{MINLP}). In fact, it follows from the assumption on $S$ that for every $j\in S$ we do not need to branch $z_j=1$ at the node $V(Z_1, Z_0, Z)$. This implies that optimal solutions of (\ref{MINLP}) satisfy the following linear inequality:
\[
\sum_{j\in S} z_j \le \#(S) -1. 
\] 
By adding this inequality in (\ref{MINLP}), we do not generate any nodes in which $S\subseteq Z_1$ hold.  We execute a greedy algorithm in Algorithm \ref{greedy} to  find a collection $\mathcal{C}$ of such sets $S$.  

\normalem
\begin{algorithm}[H] \label{greedy}
\KwIn{Data $x^0$, $x^1, x^2, \ldots x^p\in\mathbb{R}^n$}
\KwOut{A collection $\mathcal{C}$ of sets of linearly dependent vectors}
$\mathcal{C}\longleftarrow \emptyset$, $S\longleftarrow \emptyset$\;
\For{$j\rightarrow 0$ \KwTo $p$}{
\eIf{the vectors $\{x^j : j\in \{0\}\cup S\cup\{j\}\}$ is linearly independent}{
$S\longleftarrow S\cup\{j\}$\;
}{
Solve the following linear equation:
\begin{align}\label{linequ}
\sum_{k\in S\cup\{0\} }\alpha_k x^k&= x^j. 
\end{align}
$S^{\prime}\longleftarrow \{k\in S : \alpha_k\neq 0\}$, $\mathcal{C}\longleftarrow\mathcal{C}\cup\{S^{\prime}\}$\;
}
}
\Return{$\mathcal{C}$}\;
\caption{A greedy algorithm to find a collection of sets of linearly dependent vectors}
\end{algorithm}
\ULforem
We remark that the linear equation (\ref{linequ}) has a unique solution because the matrix $(x^k)_{k\in S\cup\{0\}}$ is of full column rank. 

%
%
%
%

%
%

\subsection{Computation of upper bounds based on stepwise methods}\label{subsec:stepwise}

Although we mainly use the procedure described in Section \ref{sec:MINLP} to compute upper bounds, we also use 
 the stepwise methods with forward selection (SW$_+$) and backward elimination (SW$_-$). SW$_+$ starts with no explanatory variables and adds one explanatory variable at a time until the AIC value does not decrease. More precisely, for the current set $S$ of explanatory variables, we choose an explanatory variable whose the AIC value $\mbox{AIC}(S\cup\{j\})$ is minimized over $j \in\{1, \ldots, p\}\setminus S$. SW$_-$ is just the reverse of SW$_+$. It starts with all explanatory variables and remove one explanatory variable at a time until the AIC value does not decrease. Note that since these methods add or remove one explanatory variable at a time, they may miss the best statistical model. In this sense, we can say that they are local search algorithms for variable selection. 
 
We describe our heuristics  to computer an upper bound in more details in Algorithm \ref{stepwisealgo}. To this end, we define $S\subseteq Z_1\cup Z$ for subproblem (\ref{subMINLP}) and consider the following problem:
\begin{align}\label{stepwise}
& \left\{
\begin{array}{cl}
\displaystyle\min_{\beta_j, z_j} & n\displaystyle\log\left(\sum_{i=1}^n 
\left(y_i-\beta_0 -\sum_{j=1}^p\beta_j x_{ij}\right)^2\right) + 2\displaystyle\sum_{j=1}^p z_j \\
\mbox{subject to} & \beta_0, \beta_j\in\mathbb{R}, z_j = 1 \ (j\in S), \beta_j=0, z_j = 0 \ (j\in\{1, \ldots, p\}\setminus S)
\end{array}
\right.   
\end{align}
 We denote the optimal value and an optimal solution of (\ref{stepwise}) by $\bar{\theta}_S$ and $(\bar{\beta}_S, \bar{z}_S)$, respectively.  
\normalem
\begin{algorithm}[H] \label{stepwisealgo}
\KwIn{$Z_1, Z_0$ and $Z$}
\KwOut{A feasible solution $(\beta, z)$ of (\ref{subMINLP})}
\tcc{Stepwise method with forward selection}
$S\longleftarrow Z_1$, $v_f\longleftarrow \infty$\;
\While{$\bar{\theta}_S < v_f$}{
 $v_f\longleftarrow \bar{\theta}_S$, $(\beta_f, z_f)\longleftarrow (\bar{\beta}_S, \bar{z}_S)$\;
 Find $j\in Z\setminus S$ such that  $\bar{\theta}_{S\cup\{j\}}$ is minimized over all $j\in Z\setminus S$\;
 $S\longleftarrow S\cup\{j\}$\;
}
\tcc{Stepwise method with backward elimination}
$S\longleftarrow Z_1\cup Z$, $v_b\longleftarrow \infty$\;
\While{$\bar{\theta}_S < v_b$}{
 $v_b\longleftarrow \bar{\theta}_S$, $(\beta_b, z_b)\longleftarrow (\bar{\beta}_S, \bar{z}_S)$\;
 Find $j\in Z\cap S$ such that  $\bar{\theta}_{S\setminus\{j\}}$ is minimized over all $j\in Z\cap S$\;
 $S\longleftarrow S\setminus\{j\}$\;
}
\eIf{$v_f < v_b$}{
\Return{$(\beta_f, z_f)$}\;
}{
\Return{$(\beta_b, z_b)$}\;
}
\caption{Stepwise methods to compute an upper bound}
\end{algorithm}
\ULforem
 
 We remark that an optimal solution of (\ref{stepwise}) is feasible for the subproblem (\ref{subMINLP}) if $Z_1\subseteq S$. Since $S$ always contains $Z_1$ in Algorithm \ref{stepwisealgo}, the returned solution $(\beta, z)$ is feasible for (\ref{subMINLP}). In addition, we set $Z_1$ as the initial set of SW$_+$ instead of the empty set  because we execute Algorithm \ref{stepwisealgo} at the node $V(Z_1, Z_0, Z)$. Similarly, we set $Z_1\cup Z$ as the initial set of SW$_-$. These are different from the original stepwise methods.

 In statistical applications, instead of finding the global minimum of (\ref{MINLP}), stepwise methods, which are local search algorithms, are commonly used in practice.  In fact, they often find a better statistical model and work effectively in our implementation.   However since stepwise methods spend more computational costs than the procedure described in Section \ref{sec:MINLP},  we apply  Algorithm \ref{stepwisealgo} to only subproblem (\ref{subMINLP}) at  the node whose depth from the root node is less than or equal to 10 in our implementation.

\subsection{Most frequent branching and Strong branching}\label{subsec:branch}

We define two branching rules for variables $z_j$ to improve the performance of our implementation. The first one is called {\itshape most frequent branching} and uses all stored feasible solutions in the procedure to compute upper bounds. The second one is called {\itshape strong branching}. This is based on the strong branching rule in \cite[Section 5.4]{Achterberg07}. We propose a more efficient computation for the strong branching rule than \cite{Achterberg07}. We will show the numerical comparison with branching rules implemented in SCIP in subsection \ref{subsec:compbran}. We will observe from the numerical results that most frequent branching is effective for a set data which  has linear dependency, while strong branching is effective for a set data which does not have linear dependency. 

The most frequent branching is based on the tendency  that some explanatory variables adopted for the best statistical model are also used in  statistical models whose AIC value is close to the smallest AIC value. By branching  variables $z_j$ in (\ref{subMINLP}) which correspond to such explanatory variables, we can expect that (\ref{subMINLP}) at the node generated by $z_j = 0$  is pruned as early as possible. To find such explanatory variables, we use  feasible solution stored in  our procedure to compute upper bounds. 
We describe the most frequent branching rule at the current node in Algorithm \ref{mostfrequent}. 

\normalem
\begin{algorithm}[H] \label{mostfrequent}
\KwIn{A positive integer $N$, a set $Z$ of unfixed variables in the node and all feasible solutions of (\ref{MINLP}) found from the root node through the current node }
\KwOut{$J\in Z$}
Choose $N$ feasible solutions $(\beta^1, z^1),  \ldots, (\beta^N, z^N)$  out of all stored feasible solutions\;   
\tcc{Here $(\beta^i, z^i)$ is a feasible solution of (\ref{MINLP}) whose objective value is the $i$th smallest in all the stored solutions}
\For{$j\in Z$}{
Compute score value $s_j$  defined by
$s_j = \#(T_j)$, where  $T_j = \{\ell\in \{1, \ldots, N\} : z^{\ell}_j = 1\}$\;
}
\Return{$J\in Z$ with $s_{J}=\displaystyle\max_{j\in Z }\{s_j\}$}\;
\caption{Most frequent branching rule}
\end{algorithm}
\ULforem
We observe in our preliminary numerical experiment  that the obtained lower bound at the child node generated by $z_{J}=0$ tends to be relatively bigger and that the pruning process tends to work earlier in comparison to branching rules of SCIP. As a result,  our proposed method with the most frequent branching rule often visits a fewer nodes in the branch-and-bound tree. 

In the strong branching rule, we  compute lower bounds for all possible branching $z_k=1$ and $z_k=0$, and choose index $k\in Z$ so that the lower bound is maximized in all computed lower bounds. More precisely, for the subproblem (\ref{subMINLP}) at a node $V(Z_1, Z_0, Z)$ and  $k\in Z$, the relaxation problem of the subproblem branched by $z_k=1$ and $z_k=0$ can be formulated as (\ref{strong1}) and (\ref{strong0}) as follows, respectively.  
\begin{align}
\label{strong1}
& \left\{
\begin{array}{cl}
\displaystyle\min_{\beta_j} & n\displaystyle\log\left(\sum_{i=1}^n 
\left(y_i-\beta_0 -\sum_{j=1}^p\beta_j x_{ij}\right)^2\right) + 2\#(Z_1\cup\{k\}) \\
\mbox{subject to} & 
\beta_0, \beta_j\in\mathbb{R} \ (j\in (Z\setminus\{k\})\cup (Z_1\cup\{k\})), \beta_j = 0 \ (j\in Z_0) 
\end{array} 
\right. \\
\label{strong0}
& \left\{
\begin{array}{cl}
\displaystyle\min_{\beta_j} & n\displaystyle\log\left(\sum_{i=1}^n 
\left(y_i-\beta_0 -\sum_{j=1}^p\beta_j x_{ij}\right)^2\right) + 2\#(Z_1) \\
\mbox{subject to} & \beta_0, \beta_j\in\mathbb{R} \ (j\in (Z\setminus\{k\})\cup Z_1), \beta_j = 0 \ (j\in Z_0\cup\{k\})  
\end{array}
\right. 
\end{align}

Since we have $(Z\setminus\{k\})\cup (Z_1\cup\{k\})=Z\cup Z_1$, the optimal value of (\ref{strong1}) for all $k\in Z$ is $\theta^*+2$, where $\theta^*$ is the optimal value of (\ref{relax})  at a node $V(Z_1, Z_0, Z)$. Hence we select an index $k\in Z$ only from  all optimal values $\theta^*_k$ of (\ref{strong0}). We describe the strong branching rule at the current node in Algorithm \ref{strongbranch}. 

\normalem 
\begin{algorithm}[H] \label{strongbranch}
\KwIn{Subproblem (\ref{subMINLP}) in the node $V(Z_1, Z_0, Z)$}
\KwOut{$J\in Z$}
\For{$k\in Z$}{
Solve (\ref{strong0}) with $k$ and obtain optimal value $\theta^*_k$\;
}
\Return{$J\in Z$ with $\theta^*_J = \displaystyle\max_{k\in Z}\left\{\theta^*_k\right\}$}\;
\caption{Strong branching rule}
\end{algorithm}
\ULforem

\section{Numerical experiments}\label{sec:numerical}

We implement our approach and procedures discussed in Sections \ref{sec:MINLP} and  \ref{sec:technique}, and  apply our implementation\footnote{This  is available at \cite{maic}.}  to benchmark data sets in \cite{ucimc}.  We apply our implementation to standardized data sets, {\itshape i.e.}  the data is  transformed to have the zero mean and unit variance. Note that the standardized data has also linear dependency even if we apply the standardization to the original data which has linear dependency. The specification of the computer 
is CPU : 3.5 GHz Intel Core i7, Memory : 16GB and OS : OS X 10.9.5. In subsections \ref{subsec:minlp} and \ref{subsec:miqp}, we adopt the most frequent branching rule for data which has linear dependency, while we adopt the strong branching for data which does not have linear dependency. In subsection \ref{subsec:compbran}, we discuss the reason why we use the different branching rules.

\subsection{Comparison with stepwise methods and MISOCP approach}\label{subsec:minlp}
We compare our proposed method with stepwise methods (SW$_+$ and SW$_-$) and the MISOCP approach proposed in \cite{Miyashiro15} via CPLEX \cite{cplex}. This approach is also obtained from (\ref{MINLP}).  Although the objective function of (\ref{MINLP}) is non-convex, the difficulty due to the non-convexity is overcome by using  
  the identity $\exp(\log(x)) = x$ and the monotonicity of the exponential function $\exp(x)$. See \cite[Section 3.2]{Miyashiro15} for the detail. The resulting problem is formulated as  MISOCP and is tractable by CPLEX. 

Table \ref{tables} shows the summary of  numerical comparisons. The mark $\bullet$ in the first column indicates that the data has  linear dependency. The second, third, and sixth columns indicate the numbers of data, the explanatory variables in the  statistical model (\ref{Reg}),  and the ones in the models found by using each method. The fifth column indicates the obtained AIC values by each method.  The values with the bold font are the best among four values. The seventh column indicates the cpu time in seconds to compute the optimal value. ``$>$5000" means that the corresponded method  cannot find the optimal value within 5000 seconds.  The last column indicates the gap in the percent  as follows:
\[
\mbox{gap} =\displaystyle\frac{\mbox{upper bound} - \mbox{lower bound}}{\max\{1, |\mbox{upper bound}|\}}\times 100. 
\]
It should be noted that if the gap  is sufficiently close to zero, then the obtained value is optimal. MINLP, MISOCP, SW$_+$ and SW$_-$ indicate the results obtained by our proposed method, MISOCP approach and the stepwise method with forward selection and backward elimination, respectively. We observe the following from Table \ref{tables}. 
\begin{itemize}
\item MINLP computes the optimal value much 
faster than MISOCP. MINLP finds smaller AIC values than MISOCP even when MINLP cannot find them within 5000 seconds. 
\item The AIC value obtained by SW$_+$ or SW$_-$ is equal to one by MINLP, {\itshape i.e.} {\ttfamily  crime} and {\ttfamily forestfires}. In fact, as we mentioned in subsection \ref{subsec:stepwise}, we use stepwise methods in some nodes in our implementation. This implies that our procedure to compute an upper bound discussed in Section \ref{sec:MINLP} cannot find  better feasible solutions than ones by the stepwise methods. 
\end{itemize}
\begin{table}[htbp]
\centering
\begin{tabular}{llllrrrr}\hline
 Name &$n$ & $p$ &Methods &AIC &$k$ &time($\sec$) &gap(\%)\\ \hline
 housing & 506 & 13 & MINLP & \bfseries 776.21 & 11 & 0.04&0.00\\ 
 &&& MISOCP & \bfseries 776.21 & 11 & 7.96 & 0.00\\ 
 &&& SW$_+$ & \bfseries 776.21 & 11 & 0.35 &--\\
 &&& SW$_-$ & \bfseries 776.21 & 11 & 0.10&--\\ 
\hline

 $\bullet$servo &  167 & 19 & MINLP & \bfseries 258.35 & 9 & 0.79& 0.00\\
 &&& MISOCP & \bfseries 258.35 & 9 & 7.99 & 0.00\\
 &&& SW$_+$ & \bfseries 258.35 & 9 & 0.19&--\\
&&& SW$_-$ & 260.16 & 10 & 0.18&-- \\ 
\hline

 $\bullet$auto-mpg &  392 & 25 & MINLP & \bfseries 332.88 & 15 & 1.76& 0.00\\
  &&& MISOCP & \bfseries 332.88 & 15 & 303.83 & 0.00\\
 &&& SW$_+$ & 334.73 & 16 & 0.49&--\\
 &&& SW$_-$ & 337.96 & 18 & 0.32&--\\ 
\hline

 $\bullet$solarflareC & 1066 & 26 & MINLP & \bfseries 2816.29 & 9 & 10.49&0.00\\
 &&& MISOCP &\bfseries 2816.29 & 9 & 304.51 & 0.00\\
 &&& SW$_+$ & \bfseries 2816.29 & 9 & 0.45&-- \\
&&& SW$_-$ & 2821.61 & 12 & 1.08 &-- \\ 
\hline
 $\bullet$solarflareM & 1066 & 26 & MINLP & \bfseries 2926.90 & 7 & 3.99&0.00\\
 &&& MISOCP &\bfseries 2926.90 & 7 & 255.02& 0.00\\
 &&& SW$_+$ & \bfseries 2926.90 & 7 & 0.36&--\\
&&& SW$_-$ & 2930.91 & 9 & 1.16&--\\
 \hline
$\bullet$solarflareX & 1066 & 26 & MINLP & \bfseries 2882.80 & 3 & 0.92&0.00\\
 &&& MISOCP &\bfseries 2882.80 & 3 & 19.39& 0.00\\
 &&& SW$_+$ & \bfseries 2882.80 & 3 & 0.18&--\\
 &&& SW$_-$ & 2891.56 & 9 & 1.20&-- \\ 
\hline

 breastcancer &  194 & 32 & MINLP & \bfseries 508.40 & 10 & 90.21&0.00\\
 &&& MISOCP & 508.62 & 10 & $>$5000 & 3.72\\
 &&& SW$_+$ & 509.50 & 8 & 0.24&--\\
 &&& SW$_-$ & 509.96 & 14 & 0.60&--\\ 
\hline

 $\bullet$forestfires & 517 & 63 & MINLP & \bfseries 1429.64 & 12 & $>$5000 &  0.77\\
  &&& MISOCP & 1431.32 & 12 & $>$5000 & 6.44 \\
 &&& SW$_+$ & \bfseries 1429.64 & 12 & 0.94&--\\ 
 &&&SW$_-$ & 1447.36 & 21 & 7.43&--\\ 
\hline

 $\bullet$automobile & 159 & 65 & MINLP & \bfseries -61.28 & 32 & $>$5000 & 13.95\\
  &&& MISOCP & -55.83 & 34 & $>$5000 & 27.22 \\
 &&& SW$_+$ & -28.55 & 21 & 1.12&--\\ 
 &&& SW$_-$ & -47.61 & 40 & 2.64&--\\
 \hline

 crime & 1993 & 100 & MINLP & \bfseries 3410.25 & 50 & $>$5000 &  0.50\\
 &&& MISOCP & 3469.34 & 74 & $>$5000 & 8.51\\
 &&& SW$_+$ & 3430.19 & 37 & 17.03&--\\ 
 &&&SW$_-$ & \bfseries 3410.25 & 50 & 105.40&--\\ 
\hline
\end{tabular}
\caption{Summary of numerical results by MINLP, MISOCP, SW$_+$ and SW$_-$}
\label{tables}
\end{table}

\subsection{Comparison of branching rules}\label{subsec:compbran}
We compare the numerical performance of the most frequent branching and strong branching with branching rules implemented in SCIP. In Table \ref{tablesb}, Std, MFB and SB stand for numerical results by the branching rules in SCIP, the most frequent branching rule and the strong branching rule. The sixth column indicates the number of visited nodes by our proposed method with the applied branching rule. 
The values with the bold font are the best among three values. 
We observe from Table \ref{tablesb}:
\begin{itemize}
\item The most frequent branching rule works more effectively than other ones for sets of data which have linear dependency. In fact, the gap by the most frequent branching rule is the smallest and the computation time is the shortest. In addition, The number of the visited nodes by the most frequent branching is also smaller than  other branching rules. In contrast, the strong branch is more efficient than other branching rules for data which do not have linear dependency. 

\item For $p\le 32$, the gap obtained by the best branching rule is the smallest in three branching rules, though it  visits fewest nodes in the branch-and-bound tree. This means that the best branching computes tighter lower bounds than other branching rules. 
\item These are the reasons why we use different branching rules in Tables \ref{tables} and \ref{tablesqp}.
\end{itemize}

\begin{table}[htbp]
\centering
\begin{tabular}{llrrrrr}\hline
 Name &Methods &AIC &$k$ &time($\sec$) & \# of visited nodes &gap(\%)\\ \hline
 housing 
 &Std &\bfseries 776.21 &11 &0.05 &55 &0.00\\ 
 &MFB &\bfseries 776.21 &11 &0.05 &49&0.00\\ 
 &SB &\bfseries 776.21 &11 &\bfseries 0.04&\bfseries 27 &0.00\\ 

\hline

 $\bullet$servo
 &Std &\bfseries 258.35 &9 &1.17 &7577 &0.00\\ 
 &MFB &\bfseries 258.35 &9 &0.79 &4705 &0.00\\ 
 &SB &\bfseries 258.35 &9 &\bfseries 0.41 &\bfseries 2261 &0.00\\ 

\hline

 $\bullet$auto-mpg
 &Std &\bfseries 332.88 &15 &4.06 &18959 &0.00\\ 
 &MFB &\bfseries 332.88 &15 &\bfseries 1.76 &\bfseries 5723 &0.00\\ 
 &SB &\bfseries 332.88 &15 &2.68 &11586 &0.00\\ 

\hline

 $\bullet$solarflareC 
 &Std &\bfseries 2816.29 &9 &53.33 &166639 &0.00\\ 
 &MFB &\bfseries 2816.29 &9 &\bfseries 10.49 &\bfseries 32261 &0.00\\ 
 &SB &\bfseries 2816.29 &9 &23.13 &79015 &0.00\\ 

\hline
 $\bullet$solarflareM
 &Std &\bfseries 2926.90 &7 &40.03 &117889 &0.00\\ 
 &MFB &\bfseries 2926.90 &7 &\bfseries 3.99 &\bfseries 11903 &0.00\\ 
 &SB &\bfseries 2926.90 &7 &23.72 &81899 &0.00\\ 

 \hline
$\bullet$solarflareX
 &Std &\bfseries 2882.80 &3 &4.37 &9737 &0.00\\ 
 &MFB &\bfseries 2882.80 &3 &\bfseries 0.92 &\bfseries 1519 &0.00\\ 
 &SB &\bfseries 2882.80 &3 &3.40 &7453 &0.00\\ 

\hline

 breastcancer 
 &Std &\bfseries 508.40 &10 &505.70 &3851$\times 10^3$ &0.00\\ 
 &MFB &\bfseries 508.40 &10 &478.66 &3422$\times 10^3$ &0.00\\ 
 &SB &\bfseries 508.40 &10 &\bfseries 90.21 &\bfseries550$\times 10^3$ &0.00\\ 

\hline

 $\bullet$forestfires
 &Std &\bfseries 1429.64 &12 &$>$5000 &7480$\times 10^3$ &1.11\\ 
 &MFB &\bfseries 1429.64 &12 &$>$5000 &13179$\times 10^3$ &\bfseries 0.77\\ 
 &SB &\bfseries 1429.64 &12 &$>$5000 &9938$\times 10^3$ &0.95\\ 
\hline

 $\bullet$automobile
 &Std &-60.29 &32 &$>$5000 &32192$\times 10^3$ &\bfseries 12.30\\ 
 &MFB &-61.28 &32 &$>$5000 &29785$\times 10^3$ &13.95\\ 
 &SB &\bfseries -61.59 &33 &$>$5000 &15300$\times 10^3$ &16.43\\ 
 \hline

 crime
 &Std &\bfseries 3410.25 &50 &$>5000$ &10272$\times 10^3$ &0.78\\ 
 &MFB &\bfseries 3410.25 &50 &$>5000$ &9753$\times 10^3$ &0.52\\ 
 &SB &\bfseries 3410.25 &50 &$>5000$ &1904$\times 10^3$ &\bfseries 0.50\\ 

\hline
\end{tabular}
\caption{Summary of numerical results by branching rules in SCIP (Std), the most frequent branching (MFB) and strong branching (SB)}
\label{tablesb}
\end{table}

\subsection{Comparison with MIQP formulation}\label{subsec:miqp}

Bertsimas  and Shioda \cite{Bertsimas09} and Bertsimas et al \cite{Bertsimas15} provide a mixed integer quadratic programming (MIQP) formulation with a cardinality constraint  for linear regression. Their formulation is available to the minimization of AIC by fixing the number of explanatory numbers from 0 to $p$. In fact, the minimization can be equivalently reformulated as follows:
\begin{align}\label{AIC3}
& \displaystyle\min_{k=0, \ldots, p}\min_{S \subseteq\{1, \ldots, p\}}\left\{
\mbox{AIC}(S) : \#(S) = k
\right\}. 
\end{align}
Since each inner optimization problem in (\ref{AIC3}) can be formulated as a MIQP problem, we can obtain the best statistical model by solving all $(p+1)$ optimization problems. In this subsection, we introduce a MIQP formulation by Bertsimas  and Shioda \cite{Bertsimas09} and Bertsimas et al \cite{Bertsimas15} for the inner optimization problems in (\ref{AIC3}). In addition, we provide a more efficient algorithm than this naive algorithm and compare the algorithm with our proposed method. 

Each inner optimization problem in (\ref{AIC3}) can be reformulated as follows: 
\begin{align}\label{AIC3k}
&\displaystyle\min_{\beta_j} 
\left\{
n\displaystyle\log\left(\sum_{i=1}^n \epsilon_i^2\right) + 2k : 
\begin{array}{l}
 \epsilon_i = y_i-\beta_0 -\displaystyle\sum_{j=1}^p\beta_j x_{ij} \ (i=1, \ldots, n), \\
\displaystyle\sum_{j=1}^p z_j = k, \beta_0, \beta_j\in\mathbb{R}\  (j=1, \ldots, p), \\
z_j\in\{0, 1\}, z_j=0\Rightarrow \beta_j=0  \  (j=1, \ldots, p)
\end{array}
\right\}
\end{align}

For any fixed $k$, since  the logarithm function in (\ref{AIC3k}) has the monotonicity, we can find an optimal solution (\ref{AIC3k}) by solving the following quadratic programming problem:
\begin{align}\label{QP2}
&\displaystyle\min_{\beta_j} 
\left\{
\displaystyle\sum_{i=1}^n\left(y_i-\beta_0 -\displaystyle\sum_{j=1}^p\beta_j x_{ij}\right)^2 : 
\begin{array}{l}
\displaystyle\sum_{j=1}^p z_j = k, \beta_0\in\mathbb{R}, \\
z_j\in\{0, 1\} \  (j=1, \ldots, p), \\
z_j=0\Rightarrow \beta_j=0  \  (j=1, \ldots, p)
\end{array}
\right\}
\end{align}
(\ref{QP2}) is a MIQP formulation. We denote the optimal value of (\ref{QP2}) by $\eta^*_k$. If (\ref{QP2}) is infeasible, we set $\eta^*_k=+\infty$.  Then  the optimal value of inner problem (\ref{AIC3k}) with $k$ is $n\log(\eta^*_k) + 2k$.  Therefore we obtain the optimal value and solution of (\ref{AIC3}) by computing all optimal values of (\ref{AIC3k}) for $k=0, \ldots, p$. We describe the naive algorithm in Algorithm \ref{naive}. 

\normalem
\begin{algorithm}[H] \label{naive}
\KwIn{Minimization of AIC (\ref{MINLP})}
\KwOut{An optimal solution of (\ref{MINLP})}
\For{$k\rightarrow 0$ \KwTo $p$}{
 Find the optimal value $\eta^*_k$ and an optimal  solution $(\beta^*_k, z^*_k)$ of   (\ref{QP2}) with $k$\;
 }
Find an index $K$ with $\theta^*_{K} = \displaystyle\min_{k=0, \ldots, p}\{n\log(\eta^*_k) + 2k\}$\;
\Return{$(\beta^*_{K}, z^*_{K})$}\; 
\caption{Naive algorithm for (\ref{MINLP}) via MIQP}
\end{algorithm}
\ULforem

The following lemma ensures that we can find an upper bound of $k$ if we have a feasible solution of (\ref{AIC2}). 

\begin{lemma}\label{update}
Let $\hat{\theta}\in\mathbb{R}^{p+1}$ be the optimal value of the following optimization problem:
\begin{align}\label{QP3}
& \displaystyle\min_{\beta_j}\left\{
n\log\left(\sum_{i=1}^n \left(y_i - \beta_0 - \sum_{j=1}^p \beta_{j}x_{ij}\right)^2\right) : \beta_0, \beta_j\in\mathbb{R} \ (j=1, \ldots, p)
\right\}. 
\end{align}
In addition, $\bar{\theta}$ is the objective value of (\ref{MINLP}) at a feasible solution of (\ref{MINLP}). Then any optimal solution $(\beta^*, z^*)$ of (\ref{MINLP}) satisfies 
\[
\sum_{j=1}^p z_j^* \le \left\lfloor \frac{\bar{\theta}-\hat{\theta}}{2}\right\rfloor. 
\]
\end{lemma}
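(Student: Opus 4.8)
The plan is to bound $k^\ast := \sum_{j=1}^p z_j^\ast$ for an optimal solution $(\beta^\ast, z^\ast)$ of (\ref{MINLP}) by playing the two additive terms of the objective of (\ref{MINLP}) against $\hat\theta$ and $\bar\theta$ separately. The key observation is that deleting all the combinatorial structure of (\ref{MINLP}) — the binary variables $z_j$, the constraint $\sum z_j = k$, and the logical implications — leaves exactly the unconstrained problem (\ref{QP3}); hence its optimum $\hat\theta$ is a lower bound for the ``continuous part'' $n\log(\sum_i \epsilon_i^2)$ of the objective of (\ref{MINLP}) evaluated at \emph{any} feasible point, while $\bar\theta$ is, by hypothesis, an upper bound for the whole optimal value.

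Concretely, first I would record that $\hat\theta$ is a finite real number: arguing exactly as in the proof of Lemma \ref{existos} with $S=\{1,\dots,p\}$, the quadratic function $\sum_{i=1}^n(y_i-\beta_0-\sum_{j=1}^p\beta_j x_{ij})^2$ is bounded below and therefore attains its minimum, so by monotonicity of $\log$ the problem (\ref{QP3}) has a finite optimal value $\hat\theta$. Next, let $(\beta^\ast,z^\ast)$ be any optimal solution of (\ref{MINLP}), set $k^\ast=\sum_{j=1}^p z_j^\ast$ and $\epsilon_i^\ast=y_i-\beta_0^\ast-\sum_{j=1}^p\beta_j^\ast x_{ij}$. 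Since $\beta^\ast\in\mathbb{R}^{p+1}$ is a feasible point of the \emph{unconstrained} problem (\ref{QP3}), its objective value is at least the minimum, i.e.
\[
\hat\theta \;\le\; n\log\!\left(\sum_{i=1}^n(\epsilon_i^\ast)^2\right).
\]

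Adding $2k^\ast$ to both sides turns the right-hand side into the objective value of (\ref{MINLP}) at the optimal solution $(\beta^\ast,z^\ast)$, i.e. into the optimal value of (\ref{MINLP}); and since $\bar\theta$ is by hypothesis the objective value of (\ref{MINLP}) at some feasible solution, the optimal value of (\ref{MINLP}) is $\le\bar\theta$. Combining, $\hat\theta+2k^\ast\le\bar\theta$, so $k^\ast\le(\bar\theta-\hat\theta)/2$, and since $k^\ast$ is a nonnegative integer we conclude $k^\ast\le\lfloor(\bar\theta-\hat\theta)/2\rfloor$, which is the assertion. I do not expect a genuine obstacle here; the only points requiring a little care are that the inequality between the two $\log$ terms is valid even in the degenerate case of a perfect fit (where both sides may be $-\infty$, so no case distinction is needed) and that ``objective value at a feasible solution'' of (\ref{MINLP}) indeed furnishes an upper bound on its optimal value.
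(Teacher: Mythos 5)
Your proof is correct and follows essentially the same argument as the paper: the chain $\bar{\theta}\ge\theta^*=n\log\bigl(\sum_i(\epsilon_i^*)^2\bigr)+2\sum_j z_j^*\ge\hat{\theta}+2\sum_j z_j^*$, followed by rounding down since $\sum_j z_j^*$ is an integer. The extra remarks on the finiteness of $\hat{\theta}$ and the perfect-fit case are fine but not part of the paper's (terser) proof.
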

\begin{proof}
Let $\theta^*$ be the optimal value of (\ref{MINLP}) and $(\beta^*, z^*)$ be an optimal solution of (\ref{MINLP}). Then we have 
\[
\bar{\theta} \ge \theta^*= n\log\left(\sum_{i=1}^n \left(y_i - \beta_0^* - \sum_{j=1}^p \beta_{j}^*x_{ij}\right)^2\right) + 2\sum_{j=1}^p z_j^*\ge \hat{\theta} +  2\sum_{j=1}^p z_j^*, 
\]
and thus we have $\sum_{j=1}^p z_j^* \le (\bar{\theta}-\hat{\theta})/2$. Since $z_j^*$ is integer, we obtain the desired result. 
\end{proof}

We describe an algorithm based on Lemma \ref{update} in Algorithm \ref{fast}.

\normalem
\begin{algorithm}[H] \label{fast}
\KwIn{Minimization of AIC (\ref{MINLP})}
\KwOut{An optimal solution of (\ref{MINLP})}
Solve (\ref{QP3}) and obtain $\hat{\theta}$\;
$\bar{\theta}\longleftarrow+\infty$\;
\For{$k\rightarrow 0$ \KwTo $p$}{
 \If{$k> \left\lfloor \frac{\bar{\theta}-\hat{\theta}}{2}\right\rfloor$}{
 	Stop\;
 }
 Find the optimal value $\eta^*_k$ and solution $(\beta^*_k, z^*_k)$ of   (\ref{QP2}) with $k$\;
 \If{$\bar{\theta}\ge n\log(\eta^*_k) + 2k$}{
 $\bar{\theta}\longleftarrow n\log(\eta^*_k) + 2k$, $(\beta^*, z^*)\longleftarrow(\beta^*_k, z^*_k)$\;
 }
 }
\Return{$(\beta^*, z^*)$}\; 
\caption{Faster algorithm for (\ref{MINLP}) via MIQP}
\end{algorithm}
\ULforem

We give details on our numerical experiment. 
\begin{enumerate}[label=$\square$]
\item We solve (\ref{AIC3k}) by CPLEX. In particular, since the last constraints in (\ref{AIC3k}) represent the logical relationship between $z_j$ and $\beta_j$, we use {\itshape indicator} implemented in CPLEX to represent these constraints. 
\item 
We add linear inequalities in (\ref{AIC3k}) by applying Lemma \ref{lineardepend} to (\ref{AIC3k}) when a given set of data  has linear dependency. 
 See subsection \ref{subsec:lineardep} for the detail. 
\item We also solve optimization problems obtained by replacing the constraint $\sum_{j=1}^p z_j = k$  by $\sum_{j=1}^p z_j \le k$ 
in (\ref{AIC3k}). In Table \ref{tablesqp}, ``Fast$\le$" indicates that we solve those problems in Algorithm \ref{fast}, while ``Fast$=$" indicates that we solve (\ref{AIC3k}) in Algorithm \ref{fast}. By this replacement, we can use an optimal solution $(\beta^*_k, z_k^*)$ of the optimization problem with $k$ to compute an upper bound  of the optimization problem with $k+1$. 
\item We terminate if the corresponded method cannot find the best AIC value within 5000 seconds. In addition, the values with the bold font are the best among four values except for ``$>$5000" in the last column. 
\end{enumerate}

We provide numerical results on our proposed method, Algorithms \ref{naive} and \ref{fast} in Table \ref{tablesqp}. 
We observe the following from Table \ref{tablesqp}: 
\begin{itemize}
\item MINLP outperforms MIQP approaches. In particular, for larger $p$, MINLP obtains much better AIC values than MIQP approaches although all approaches cannot solve within 5000 seconds. 
\item The performance of Fast$\le$ is similar to Fast$=$,  though Fast$\le$ uses an initial upper bound. 
\end{itemize}

\begin{table}[htbp]
\centering
\begin{tabular}{llrrrr}\hline
 Name &Methods &AIC &$k$ &time($\sec$) \\ \hline
 housing
 &MINLP &\bfseries 776.21 &11 &\bfseries 0.04 \\ 
 &Naive &\bfseries 776.21 &11 &2.54 \\ 
 &Fast$=$ &\bfseries 776.21 &11 &2.15 \\ 
 &Fast$\le$ &\bfseries 776.21 &11 &2.43 \\ 

\hline

 $\bullet$servo
 &MINLP &\bfseries 258.35 &9 &\bfseries 0.79 \\ 
 &Naive &\bfseries 258.35 &9 &2.27 \\ 
 &Fast$=$ &\bfseries 258.35&9 &1.27 \\ 
 &Fast$\le$ &\bfseries 258.35&9 &1.29\\ 

\hline

 $\bullet$auto-mpg
 &MINLP &\bfseries 332.88 &15 &\bfseries 1.76 \\ 
 &Naive &\bfseries 332.88 &15 &22.22 \\ 
 &Fast$=$ &\bfseries 332.88 &15 &19.04 \\ 
 &Fast$\le$ &\bfseries 332.88 &15 &14.45 \\ 

\hline

 $\bullet$solarflareC
 &MINLP &\bfseries 2816.29 &9 &\bfseries 10.49 \\ 
 &Naive &\bfseries 2816.29 &9 &26.49 \\ 
 &Fast$=$ &\bfseries 2816.29 &9 &18.17 \\ 
 &Fast$\le$ &\bfseries 2816.29 &9 &15.03 \\ 

\hline
 $\bullet$solarflareM
 &MINLP &\bfseries 2926.90 &7 &\bfseries 3.99 \\ 
 &Naive &\bfseries 2926.90 &7 &25.27 \\ 
 &Fast$=$ &\bfseries 2926.90 &7 &8.15 \\ 
 &Fast$\le$ &\bfseries 2926.90 &7 &7.24 \\ 

 \hline
$\bullet$solarflareX
 &MINLP &\bfseries 2882.80 &3 &\bfseries 0.92 \\ 
 &Naive &\bfseries 2882.80 &3 &10.65 \\ 
 &Fast$=$ &\bfseries 2882.80 &3 &2.25 \\ 
 &Fast$\le$ &\bfseries 2882.80 &3 &2.40 \\ 

\hline

 breastcancer
 &MINLP &\bfseries 508.40 &10 &\bfseries 90.21 \\ 
 &Naive &\bfseries 508.40 &10 &420.44 \\ 
 &Fast$=$ &\bfseries 508.40 &10 &402.64 \\ 
 &Fast$\le$ &\bfseries 508.40 &10 &421.96 \\ 

\hline

 $\bullet$forestfires
 &MINLP &\bfseries 1429.64 &12 &$>$5000 \\ 
 &Naive &1435.07 &7 &$>$5000 \\ 
 &Fast$=$ &1435.07 &7 &$>$5000 \\ 
 &Fast$\le$ &1435.07 &7 &$>$5000 \\ 

\hline

 $\bullet$automobile
 &MINLP &\bfseries -61.28 &32 &$>$5000 \\ 
 &Naive &52.84 &8 &$>$5000 \\ 
 &Fast$=$ &52.84 &8 &$>$5000 \\ 
 &Fast$\le$ &52.84 &8 &$>$5000 \\ 

 \hline

 crime
 &MINLP &\bfseries 3410.25 &50 &$>$5000 \\ 
 &Naive &3646.35 &4 &$>$5000 \\ 
 &Fast$=$ &3646.35 &4 &$>$5000 \\ 
 &Fast$\le$ &3646.35 &4 &$>$5000 \\ 

\hline
\end{tabular}
\caption{Summary of numerical results by MINLP, Naive (Algorithm \ref{naive}), Fast$=$ and Fast$\le$ (Algorithm \ref{fast})}
\label{tablesqp}
\end{table}

\section{Conclusion}\label{sec:futurework}
We propose the MINLP formulation (\ref{MINLP}) of AIC minimization for linear regression, and implement it by using SCIP. We formulate an unconstrained optimization problem (\ref{relax}) as the relaxation problem of the subproblem (\ref{subMINLP}). As a result, a lower bound can be computed by solving a linear equation at each node. In addition,  an upper bound is the lower bound plus a constant, and a feasible solution is generated from a solution after solving the relaxation problem (\ref{relax}). 

We implement this procedure with SCIP because it has the high flexibility in the user plugin. In fact, we implement a relaxator to compute lower and upper bounds, and two branching rules to prune  subproblems efficiently. In addition, our implementation efficiently prunes and branches subproblems by using linear dependency in data set and two branching rules. As a result, we can obtain the best statistical models (\ref{Reg}) for $p\le 32$.  In addition, we observe that our implementation outperforms MISOCP approach \cite{Miyashiro15} and MIQP approaches \cite{Bertsimas09, Bertsimas15} in our numerical experiments.

Future work involves to apply our implementation to data sets with  larger $p$ and/or $n$.  A possible choice to accomplish this involves the use of parallel computation via ParaSCIP and FiberSCIP \cite{Shinano12}. Secondly, various non-AIC information criterion, {\itshape e.g.} BIC and Hannan-Quinn information criteria  are already proposed. By changing the objective function in (\ref{MINLP}), our proposed method can be applied  to these information criteria as well. 

\section*{Acknowledgements}
The second author was supported by JSPS KAKENHI Grant Numbers 26400203. 



\end{document}